\theoremstyle{plain}
\newtheorem{theorem}{Theorem}
\newtheorem*{proposition*}{Proposition}
\newtheorem*{corollary*}{Corollary}
\newtheorem{lemma}[theorem]{Lemma}
\newtheorem*{theorem*}{Theorem}
\newtheorem*{lemma*}{Lemma}
\newtheorem*{conjecture*}{Conjecture}
\newtheorem{conjecture}[theorem]{Conjecture}
\newtheorem*{question*}{Question}
\newtheorem{question}[theorem]{Question}
\newtheorem*{problem*}{Problem}
\theoremstyle{definition}
\newtheorem*{exercise*}{Exercise}
\theoremstyle{remark}
\newtheorem*{remark*}{Remark}
\newtheorem*{remarks*}{Remarks}
\newtheorem*{claim*}{Claim}
\newcommand{\subclass}[1]{}
\newcommand{\enumTi}[1]{\renewcommand{\theenumi}{#1}}
\newcommand{\alphenumi}{\enumTi{\alph{enumi}}}
\newcommand{\romenumi}{\enumTi{\roman{enumi}}}
\DeclareMathOperator{\rk}{rk}
\newcommand{\FF}{\mathbb{F}}
\newcommand{\QQ}{\mathbb{Q}}
\newcommand{\RR}{\mathbb{R}}
\newcommand{\ZZ}{\mathbb{Z}}
\newcommand{\kk}{\mathbb{k}}
\newlength{\algotabbingwidth}
\renewcommand{\paragraph}[1]{\par\smallskip\noindent{#1}}
\newcommand{\Mt}{M^{\scriptscriptstyle(t)}}
\newcommand{\nt}{n^{\scriptscriptstyle(t)}}
\begin{document}

\title[Fooling-sets and rank in nonzero characteristic]%
{Fooling-sets and rank in nonzero characteristic\\
  \footnotesize(extended abstract)%
}%


\author{Mirjam Friesen${}^\dagger$}%
\author{Dirk Oliver Theis${}^*$}%

\date{{Fri Mar 15 16:10:39 EET 2013}\\ %
  ${}^\dagger$ Faculty of Mathematics, Otto von Guericke University Magdeburg, Germany\\
  ${}^*$ Faculty of Mathematics and Computer Science, University of Tartu, Estonia,
  \tiny\texttt{dirk.oliver.theis@ut.ee}}

\begin{abstract}
  An $n\times n$ matrix $M$ is called a \textit{fooling-set matrix of size $n$,} if its diagonal entries
  are nonzero, whereas for every $k\ne \ell$ we have $M_{k,\ell} M_{\ell,k} = 0$.  Dietzfelbinger,
  Hromkovi{\v{c}}, and Schnitger (1996) showed that $n \le (\rk M)^2$, regardless of over which field the
  rank is computed, and asked whether the exponent on $\rk M$ can be improved.

  \noindent%
  We settle this question for nonzero characteristic by constructing a family of matrices for which the
  bound is asymptotically tight.  The construction uses linear recurring sequences.
\end{abstract}
\maketitle

\section{Introduction}

An $n\times n$ matrix~$M$ over some field~$\kk$ is called a \textit{fooling-set matrix of size~$n$} if
\begin{subequations}\label{eq:def-fool}
  \begin{eqnarray}
    &M_{kk} \ne 0 &\text{ for all~$k$ (its diagonal entries are all nonzero), and} \label{eq:def-fool:diag}\\
    &M_{k,\ell} \, M_{\ell,k} = 0 &\text{ for all $k\ne \ell$.}                    \label{eq:def-fool:off-diag}
  \end{eqnarray}
\end{subequations}
Note that the definition depends only on the zero-nonzero pattern of~$M$.

In Communication Complexity and Combinatorial Optimization, one is interested in finding a large
fooling-set (sub-)matrix contained in a given matrix~$A$ (permutation of rows and columns is allowed), as
its size provides a lower bound to other numerical properties of the matrix.  Since large fooling-set
submatrices are typically difficult to identify (the problem is equivalent to finding a large clique in a
graph of a certain type), one would like to upper-bound the size of a fooling-set matrix one may possibly
hope for in terms of easily computable properties of~$A$.

Dietzfelbinger, Hromkovi{\v{c}}, and Schnitger (\cite[Thm.~1.4]{DietzfelbingerHromkovicSchnitger96}, or
see~\cite[Lemma~4.15]{KushilevitzNisan97}; cf.~\cite{KlauckDewolf13,FioriniKaibelPashkovichTheis13})
proved that the rank of a fooling-set matrix of size~$n$ is at least $\sqrt n$, i.e.,
\begin{equation}\label{eq:rk-of-fool} 
  n \le (\rk_{\kk} M)^2.
\end{equation}
This inequality gives such an upper bound on the largest fooling-set submatrix in terms of the easily
computable rank of~$A$.

However, it is an open question whether the exponent on the rank in the right-hand side
of~\eqref{eq:rk-of-fool} can be improved or not.  Dietzfelbinger et al.~\cite[Open
Problem~2]{DietzfelbingerHromkovicSchnitger96} were particularly interested in 0/1-matrices and
$\kk=\FF_2$, which corresponds to the Communication Complexity situation they dealt with.

Klauck and de Wolf~\cite{KlauckDewolf13} have pointed out the importance for Communication Complexity of
the question regarding general (i.e., not 0/1) matrices.

Currently, the examples (attributed to M.~H\"uhne in~\cite{DietzfelbingerHromkovicSchnitger96}) of 0/1
fooling-set matrices~$M$ with smallest rank are such that $n \approx (\rk_{\FF_2} M)^{\log_4 6}$ ($\log_4
6 = 1.292\dots$); for general matrices, Klauck and de Wolf~\cite{KlauckDewolf13} have given examples with
$n \approx (\rk_\QQ M)^{\log_3 6}$ ($\log_3 6 = 1.63\dots$).

In our paper, we settle the question for fields~$\kk$ of nonzero characteristic.  We prove that
inequality~\eqref{eq:rk-of-fool} is asymptotically tight if the characteristic of~$\kk$ is nonzero.
Notably, not only is the exponent on the rank in inequality~\eqref{eq:rk-of-fool} best possible, but so
is the constant (one) in front of the rank.


\paragraph{\bf Organization of this extended abstract.}  %
In the next section we will explain some of the connections of the fooling-set vs.\ rank problem with
Combinatorial Optimization and Graph Theory concepts.
In Section~\ref{sec:construction}, we will sketch the proof of our result.
In the final section, we point to some questions which remain open.

\section{Some Remarks on the Importance of Fooling-Set Matrices}\label{sec:connect}

While the fooling-set size vs.\ rank problem is of interest in its own right as a minimum-rank type
problem in Combinatorial Matrix Theory, fooling-set matrices are connected to other areas of Mathematics
and Computer Science.

\small%
\paragraph{\bf In Polytope Theory,} given a polytope~$P$, sizes of fooling-set submatrices of
appropriately defined matrices provide lower bounds to the number of facets of any polytope~$Q$ which can
be mapped onto~$P$ by a projective mapping (\cite{Yannakakis91},
cf.~\cite{FioriniKaibelPashkovichTheis13}).
Similarly, in \textbf{Combinatorial Optimization,} sizes of fooling-set matrices are lower bounds to the
minimum sizes of Linear Programs for combinatorial optimization problems (\cite{Yannakakis91}).  For
example, it is an open question whether Edmond's matching polytope for a complete graph on~$n$ vertices
admits a fooling-set matrix whose size grows quicker in~$n$ than the dimension of the polytope.  Such a
fooling-set matrix would yield a fairly spectacular improvement on the currently known lower bounds of
sizes of Linear Programming formulations for the matching problem.
See~\cite{FioriniKaibelPashkovichTheis13} for bounds based on fooling sets for a number of combinatorial
optimization problems, including bipartite matching.

In the Polytope Theory / Combinatorial Optimization applications, we typically have $\kk=\QQ$, and the
rank of the large matrix~$A$ is known.  However, since the definition of a fooling-set matrix depends
only on the zero-nonzero pattern, changing the field from $\QQ$ to $\kk'$ and replacing the nonzero
rational entries of~$A$ by nonzero numbers in~$\kk'$ may yield a lower rank and hence a better upper
bound on the size of a fooling-set matrix.

\paragraph{\bf In Computational Complexity,} %
fooling-set matrices provide lower bounds for the communication complexity of Boolean functions (see,
e.g.,
\cite{AroraBarak09,KushilevitzNisan97,LovaszSaks88Moeb,DietzfelbingerHromkovicSchnitger96,KlauckDewolf13}),
and for the number of states of an automaton accepting a given language (e.g., \cite{GruberHolzer06}).

\paragraph{\bf In Graph Theory,} %
a fooling-set matrix (up to permutation of rows and columns) can be understood as the incidence matrix of
a bipartite graph containing a perfect cross-free matching.  Recall that a matching in a bipartite
graph~$H$ is called \textit{cross-free} if no two matching edges induce a~$C_4$-subgraph of~$H$.

Cross-free matchings are best known as a lower bound on the size of biclique coverings of graphs (e.g.\
\cite{Dawande03,JuknaKulikov09}).  A \textit{biclique covering} of a graph~$G$ is a collection of
complete bipartite subgraphs of~$G$ such that each edge of~$G$ is contained in at least one of these
bipartite subgraphs.  If a cross-free matching of size~$n$ is contained as a subgraph in~$G$, then at
least~$n$ bicliques are needed to cover all edges of~$G$.  (For some classes of graphs, this is a sharp
lower bound on the biclique covering number~\cite{Dawande03,SotoTelha11}).

\paragraph{\bf In Matrix Theory,} the maximum size of a fooling-set sub-matrix is known under a couple of
different names, e.g.~as independence number \cite[Lemma 2.4]{CohenRothblum93}), or as the intersection
number.  For some semirings, this number provides a lower bound for the so-called factorization rank of
the matrix over the semiring.


\normalsize%
\paragraph{\bf In each of these areas,}  %
fooling-set matrices are used as lower bounds.  Upon embarking on a search for a big fooling-set matrix
in a large, complicated matrix~$A$, one is interested in an \textit{a priori} upper bound on their sizes
and thus the potential usefulness of the lower bound method.

\numberwithin{theorem}{section}
\section{Fooling-Set Matrices from Linear Recurring Sequences}\label{sec:construction}

For a prime number~$p$, we denote by $\FF_p$ the finite field with~$p$ elements.  The following is an
accurate statement of our result.

\begin{theorem}\label{thm:main}
  For every prime number~$p$, there is a family of fooling-set matrices $\Mt$ over $\FF_p$ of size~$\nt$,
  $t=1,2,3,\dots$, such that $\nt \to \infty$, and
  \begin{equation*}
    \frac{ \nt }{ (\rk_{\FF_p} \Mt)^2 } \;\longrightarrow 1.
  \end{equation*}
\end{theorem}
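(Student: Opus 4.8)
The plan is to exhibit the matrices $\Mt$ explicitly via a linear recurring sequence over $\FF_p$, and then to bound the rank using the theory of Hankel matrices of such sequences. Fix a prime $p$, and let $(a_i)_{i\ge 0}$ be a linear recurring sequence over $\FF_p$ satisfying a recurrence of order $d$ with characteristic polynomial $f(x)\in\FF_p[x]$. For a parameter $N$, I would take $\Mt$ to be the $N\times N$ Hankel matrix $M_{k,\ell} = a_{k+\ell}$ (indices $0\le k,\ell\le N-1$), possibly after reindexing so that it is made "lower-triangular-looking": the key point is that for a fooling-set matrix we need $M_{kk}\ne 0$ for all $k$ and $M_{k,\ell}M_{\ell,k}=0$ for $k\ne\ell$. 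Since a Hankel matrix is symmetric, $M_{k,\ell}=M_{\ell,k}=a_{k+\ell}$, so the off-diagonal fooling condition would force $a_{k+\ell}=0$ for all $k\ne\ell$, which is too strong. So instead I expect the right construction is \emph{not} the plain Hankel matrix but a matrix $M_{k,\ell}=a_{\sigma(k,\ell)}$ for a cleverly chosen pairing function $\sigma$ — e.g.\ $\sigma(k,\ell)$ built from the binary (or base-$p$) digit interleaving of $k$ and $\ell$ — so that the diagonal $k=\ell$ lands on indices $i$ with $a_i\ne 0$, and for each off-diagonal pair exactly one of $\sigma(k,\ell),\sigma(\ell,k)$ lands on an index $i$ with $a_i=0$. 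The linear recurring sequence is then chosen so its zero-set has the required structure (a large "spread-out" set of zeros), and its complexity — the linear complexity $d$ of the sequence — controls $\rk \Mt$.

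Concretely, the steps I would carry out are: (1) Choose the recurring sequence and the index function $\sigma$ so that $\Mt$ is genuinely a fooling-set matrix of size $\nt := N$; here the arithmetic of the zero-pattern of the sequence is everything, and I would want to pick $f$ so that the sequence is periodic with period $P$ and has exactly the right density and placement of zeros within a period. (2) Show $\rk_{\FF_p}\Mt \le$ (something close to) $\sqrt{N}$. The rank bound should come from writing $\Mt$ as a product: if $\sigma(k,\ell)$ decomposes as a "sum-like" combination of a function of $k$ and a function of $\ell$ — say $a_{\sigma(k,\ell)}$ can be expressed through the recurrence as a bilinear form $u(k)^{\Tp} A\, v(\ell)$ where $u(k),v(\ell)\in\FF_p^{\,d}$ are state vectors of the recurrence and $A$ is a fixed $d\times d$ matrix — then $\rk \Mt \le d$. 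Choosing the sequence so that $d = O(\sqrt N)$, or more precisely $d \sim \sqrt{\nt}$, gives the result. (3) Finally verify $\nt/(\rk\Mt)^2\to 1$, which forces the construction to be tight: one wants $d$ to be \emph{exactly} asymptotic to $\sqrt{\nt}$, not merely $O(\sqrt{\nt})$, so the family must be tuned (e.g.\ $\nt = d^2$ on the nose, or $\nt = d^2 - o(d^2)$), and the lower bound $\nt \le d^2$ is automatic from the general inequality $n\le(\rk M)^2$ — we only have to produce matrices meeting it.

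The main obstacle, I expect, is step (1): simultaneously arranging that (a) every diagonal entry is nonzero, (b) for every off-diagonal pair at least one of the two mirror entries is zero, and (c) the index function $\sigma$ still decomposes bilinearly through a recurrence of order only $\approx\sqrt N$. Requirements (a)–(b) push toward a zero-pattern that is "half-dense and antisymmetric" under $\sigma$, while (c) demands that $\sigma$ be essentially additive in a state-space sense; reconciling these is the crux. I anticipate the resolution uses a base-$q$ (or binary) digit construction where $k$ and $\ell$ contribute digits in disjoint positions, combined with a sequence whose zero-set is a "multiplicative/additive translate" structure arising from a polynomial $f$ with a single root of high multiplicity or from a well-chosen product of cyclotomic-type factors — so that "exactly one of $\sigma(k,\ell),\sigma(\ell,k)$ is a zero of the sequence" becomes a clean statement about which of two interleaved digit strings is lexicographically larger. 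Once the combinatorics of the zero-pattern is pinned down, the rank computation and the limit $\nt/(\rk\Mt)^2\to 1$ should follow by routine estimates on linear complexity and period length.
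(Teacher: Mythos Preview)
Your instinct to use a linear recurring sequence is right, and so is your observation that a plain Hankel matrix $M_{k,\ell}=a_{k+\ell}$ fails by symmetry. But the detour you take from there --- a digit-interleaving index function $\sigma(k,\ell)$ --- is the wrong correction, and it is where the proposal loses traction. The fix is much simpler: use a \emph{Toeplitz} matrix $M_{k,\ell}=f(k-\ell)$. Then the diagonal condition is just $f(0)\ne 0$, and the off-diagonal fooling condition becomes the single requirement
\[
  f(k)\,f(-k)=0 \quad\text{for all }k\in\{1,\dots,n-1\},
\]
a clean statement about the sequence alone. No pairing function, no digit combinatorics. The rank bound is then immediate: if $f$ satisfies an order-$r$ recurrence, every row of $M$ past the $r$th is a fixed linear combination of earlier rows, so $\rk M\le r$.

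What remains --- and this is the genuine content you have not supplied --- is to \emph{choose} the recurrence and the parameters so that $f(k)f(-k)=0$ holds on a range of length $n\sim r^2$. The paper takes the specific recurrence $f(k+r)=-f(k)-f(k+1)$ with $f(0)=1$, $f(1)=\cdots=f(r-1)=0$, sets $r=p^t+1$, and shows that (i) each block $\{jr,\dots,(j+1)r-1\}$ contains a run of zeros of length $r-1-j$, and (ii) $n=r(r-1)+1$ is a period of $f$. These two facts combine to give $f(k)f(n-k)=0$ for $1\le k\le n-1$, hence the fooling property, and then $n/r^2=(r^2-r+1)/r^2\to 1$. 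None of this structure is visible from your plan: your steps (1)--(3) remain at the level of desiderata (``the sequence is chosen so its zero-set has the required structure''), and the proposed mechanism --- lexicographic comparison of interleaved digit strings --- is speculative and unconnected to any concrete recurrence. As written, the proposal does not contain a construction, only a hope that one exists along a route that is both more complicated than needed and not the one that actually works.
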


The method used in all the \emph{earlier} examples (mentioned in the introduction) of fooling-set
matrices with small rank was the following: One conjures up a single, small fooling-set matrix~$M^0$ (of
size, say, 6), determines its rank (say, 3), and then uses the tensor-powers of~$M^0$ (which are
fooling-set matrices, too).  With these numerical values, from~$M^0$, one obtains $\log_36$ as a lower
bound on the exponent on the rank in~\eqref{eq:rk-of-fool}.

Our technique is a departure from that approach.  As noted above, we use linear recurring sequences.  For
every~$t$, we construct an $\nt$-periodic function, which gives us a fooling-set matrix of size~$\nt$.

\paragraph{We now describe that construction.}  %
Let~$p$ be a prime number and $r \ge 2$ an integer.  Define the function $f\colon \ZZ\to \FF_p$ by the
recurrence relation
\begin{subequations}\label{eq:def-f}
  \begin{equation}\label{eq:def-f:recrel}
    f(k+r) = -f(k) - f(k+1) \quad\text{for all $k\in \ZZ$}
  \end{equation}
  and the initial conditions
  \begin{equation}\label{eq:def-f:initial}
    f(0) = 1\text{, and } f(1) = \ldots = f(r-1) = 0.
  \end{equation}
\end{subequations}

Fix an integer $n > r$.  From the sequence, we define an $n\times n$ matrix as follows.  For ease of notation, the matrix indices are taken to be in
$\{0,\dots,n-1\}\times \{0,\dots,n-1\}$.  We let
\begin{equation}\label{eq:def-M}
  M_{k,\ell} := f(k-\ell).
\end{equation}

It is fairly easy to see that $\rk M \le r$.

\begin{lemma}\label{lem:rk-M}
  The rank of~$M$ is at most~$r$.
\end{lemma}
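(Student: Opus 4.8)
The plan is to exploit the structural fact that $M$ is the Toeplitz matrix of the bi-infinite sequence $\bigl(f(k)\bigr)_{k\in\ZZ}$, together with the observation that \emph{every index-shift} of that sequence again satisfies the order-$r$ recurrence~\eqref{eq:def-f:recrel}. Since the solution space of that recurrence is $r$-dimensional, the columns of $M$ — which are restrictions of shifts of $f$ — all lie in one common $r$-dimensional subspace of $\FF_p^n$, which forces $\rk M \le r$.

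In detail, I would first introduce the $\FF_p$-vector space $V$ of all functions $g\colon\ZZ\to\FF_p$ satisfying $g(k+r) = -g(k)-g(k+1)$ for every $k\in\ZZ$. A sequence in $V$ is uniquely determined by the $r$ consecutive values $g(0),\dots,g(r-1)$, and conversely any choice of those values extends to an element of $V$ — forwards by~\eqref{eq:def-f:recrel} and backwards by the equivalent relation $g(k) = -g(k+1)-g(k+r)$ — so $\dim_{\FF_p}V = r$. By~\eqref{eq:def-f:recrel} (and~\eqref{eq:def-f:initial} for the initial data), $f$ itself lies in $V$.

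Next, for $\ell\in\{0,\dots,n-1\}$ let $f_\ell\colon\ZZ\to\FF_p$ be the shift $f_\ell(k):=f(k-\ell)$. Translation invariance of the recurrence gives $f_\ell\in V$ for each $\ell$. By~\eqref{eq:def-M}, the $\ell$-th column of $M$ is precisely $\rho(f_\ell)$, where $\rho\colon V\to\FF_p^n$, $g\mapsto\bigl(g(0),\dots,g(n-1)\bigr)$, is the (linear) restriction map. Hence the column space of $M$ is contained in $\rho(V)$; since $\dim\rho(V)\le\dim V = r$, we conclude $\rk M\le r$.

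There is no genuine obstacle here; the only point requiring care is the bookkeeping of index shifts — verifying that $f_\ell$ really lies in $V$ (which works precisely because~\eqref{eq:def-f:recrel} is imposed for \emph{all} $k\in\ZZ$, so no boundary effect intervenes), and, in the equivalent elementary reformulation, checking that the induced column identity $c_\ell = -c_{\ell-1}-c_{\ell-r}$ is valid for exactly $\ell\in\{r,\dots,n-1\}$, so that a one-line downward induction shows that $c_0,\dots,c_{r-1}$ already span the column space. Either way one lands on $\rk M\le r$.
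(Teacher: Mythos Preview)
Your proof is correct and rests on the same idea as the paper's: the recurrence~\eqref{eq:def-f:recrel} forces each row (in the paper) or column (in your version) beyond the first~$r$ to be a linear combination of earlier ones. The paper's argument is the two-line row identity $M_{k,\star} = -M_{k-r,\star} - M_{k-r+1,\star}$ for $k\ge r$, which is precisely the ``elementary reformulation'' you mention at the end, transposed.
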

\begin{proof}
  From~\eqref{eq:def-f:recrel}, for $k \ge r$, we deduce the equation $M_{k,\star} = -M_{k-r,\star} - M_{k-r+1,\star}$.
  Hence, each of the rows $M_{k,\star}$, $k \ge r$, is a linear combination of the first~$r$ rows of~$M$.
\end{proof}

It can be seen that the rank is, in fact, equal to~$r$: The top-left $r\times r$ sub-matrix is regular
because it is upper-triangular with nonzeros along the diagonal.

Next, we reduce the fooling-set property~\eqref{eq:def-fool} to a property of the function~$f$.

\begin{lemma}\label{lem:fool-eq}\mbox{}
  The matrix~$M$ defined in~\eqref{eq:def-M} is a fooling-set matrix, if and only if,
  \begin{equation}\label{eq:cross-symmetry}
    f(k) f(-k) = 0  \quad\text{ for all $k \in \{1,\dots,n-1\}$.}
  \end{equation}
\end{lemma}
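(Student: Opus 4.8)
The plan is to prove the equivalence directly from the definitions, noting that the condition~\eqref{eq:def-fool:diag} is automatic. Indeed, for any $k\in\{0,\dots,n-1\}$ we have $M_{k,k}=f(k-k)=f(0)=1\ne 0$ by the initial condition~\eqref{eq:def-f:initial}, so the diagonal part of the fooling-set property holds unconditionally. Hence $M$ is a fooling-set matrix if and only if the off-diagonal part~\eqref{eq:def-fool:off-diag} holds, i.e.\ $M_{k,\ell}M_{\ell,k}=0$ for all $k\ne\ell$ in $\{0,\dots,n-1\}$.

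Next I would translate this off-diagonal condition through the definition $M_{k,\ell}=f(k-\ell)$. We have $M_{k,\ell}M_{\ell,k}=f(k-\ell)f(\ell-k)$, so the condition becomes: $f(j)f(-j)=0$ whenever $j=k-\ell$ with $k\ne\ell$ and $k,\ell\in\{0,\dots,n-1\}$. The set of such differences $j$ is exactly $\{-(n-1),\dots,-1\}\cup\{1,\dots,n-1\}$. But the map $j\mapsto f(j)f(-j)$ is symmetric in $j\leftrightarrow -j$, so requiring it to vanish on all of $\{1,\dots,n-1\}$ is the same as requiring it to vanish on $\{-(n-1),\dots,-1\}\cup\{1,\dots,n-1\}$. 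This gives the claimed equivalence with~\eqref{eq:cross-symmetry}.

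To make this rigorous I would argue both directions explicitly. For the forward direction, assume $M$ is a fooling-set matrix; given $k\in\{1,\dots,n-1\}$, apply~\eqref{eq:def-fool:off-diag} with the index pair $(k,0)$ (both lie in $\{0,\dots,n-1\}$ and are distinct since $k\ge 1$) to get $M_{k,0}M_{0,k}=f(k)f(-k)=0$. For the converse, assume~\eqref{eq:cross-symmetry}; given distinct $k,\ell\in\{0,\dots,n-1\}$, set $j:=k-\ell$, so $j\ne 0$ and $|j|\le n-1$. If $j>0$ then $f(j)f(-j)=0$ directly; if $j<0$ then $-j\in\{1,\dots,n-1\}$ and $f(-j)f(j)=0$ by~\eqref{eq:cross-symmetry} applied to $-j$; either way $M_{k,\ell}M_{\ell,k}=f(j)f(-j)=0$. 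Together with the automatic diagonal condition, this shows $M$ is a fooling-set matrix.

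I do not anticipate a genuine obstacle here: the lemma is essentially an unwinding of definitions, and the only points requiring a word of care are (a) observing that the diagonal condition is free, and (b) handling the symmetry $j\leftrightarrow -j$ so that the index range $\{1,\dots,n-1\}$ in~\eqref{eq:cross-symmetry}, rather than the full symmetric range of differences, suffices. The real work of the paper lies in exhibiting parameters $p,r,n$ for which~\eqref{eq:cross-symmetry} actually holds with $n$ close to $r^2$; that is deferred to the subsequent analysis of the linear recurring sequence $f$.
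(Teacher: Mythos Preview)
Your proof is correct and follows essentially the same approach as the paper's: verify the diagonal condition from $f(0)=1$, then rewrite $M_{k,\ell}M_{\ell,k}$ as $f(k-\ell)f(-(k-\ell))$ and use the symmetry in $j\leftrightarrow -j$. If anything, you are slightly more careful than the paper, which explicitly argues only the ``if'' direction and leaves the (equally trivial) ``only if'' direction implicit.
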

\begin{proof}
  It is clear from \eqref{eq:def-f:initial} and~\eqref{eq:def-M} that $M_{j,j} = f(0) = 1$ for all
  $j=0,\dots,n-1$, so it remains to verify~\eqref{eq:def-fool:off-diag}.
  Since
  \begin{equation*}
    M_{i,j} M_{j,i} = f(i-j) f(j-i) = f(i-j) f(-(i-j)),
  \end{equation*}
  if $f(k) f(-k) = 0$ for all $k=1,\dots,n-1$, then $M_{i,j} M_{j,i}$ is zero whenever $i\ne j$.  This
  proves~\eqref{eq:def-fool:off-diag}.
\end{proof}


Given appropriate conditions on $r$ and~$n$ (depending on~$p$), this condition on~$f$ can indeed be
verified:

\begin{lemma}\label{lem:key-lemma}
  For all integers $t \ge 1$, if we let $r := p^t+1$ and $n := r(r-1)+1$, then $f(k)f(-k) = 0$ for all
  $k\in \ZZ\setminus n\ZZ$.
\end{lemma}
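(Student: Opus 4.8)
The plan is to establish three facts in order: (i) $f$ is periodic with period dividing $n$; (ii) on one period the support of $f$ is confined to a sparse union of short ``blocks''; (iii) an arithmetic count shows that a support position and its negative can never both occur, giving $f(k)f(-k)=0$ for every $k\in\ZZ\setminus n\ZZ$ at once. I expect step (i) to be the crux, since it is the only place where the hypothesis $r=p^t+1$ is used.

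For step (i), put $Q:=p^t$, so $r=Q+1$ and $n=Q^2+Q+1$; I will show that the characteristic polynomial $g(X):=X^r+X+1$ of the recurrence \eqref{eq:def-f:recrel} divides $X^n-1$ over $\FF_p$. Work in $R:=\FF_p[X]/(X^{Q+1}+X+1)$ with $x:=\overline X$, so $x^{Q+1}=-(x+1)$. Then $x$ is a unit, since $x(x^Q+1)=-1$, and $x+1=-x^{Q+1}$ is therefore also a unit; dividing the defining relation by $x$ gives $x^Q=-(1+x)x^{-1}$. Applying the $\FF_p$-algebra endomorphism $y\mapsto y^Q$ of $R$ (available because $R$ has characteristic $p$) and using the last identity again yields $x^{Q^2}=-(1+x)^{-1}$. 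Multiplying the three values gives $x^{Q^2+Q+1}=\bigl(-(1+x)^{-1}\bigr)\bigl(-(1+x)x^{-1}\bigr)x=x^{-1}x=1$, and since $Q^2+Q+1=n$ this reads $x^n=1$, i.e.\ $g\mid X^n-1$. Finally, a sequence satisfying the recurrence with characteristic polynomial $g$ also satisfies the one with characteristic polynomial $gh$ for any $h\in\FF_p[X]$; taking $gh=X^n-1$ shows $f(k+n)=f(k)$ for all $k$.

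For step (ii), I claim that $f(k)\ne 0$ with $1\le k\le n-1$ forces $k=\mu(r-1)+j$ for integers $1\le j\le\mu\le r-1$. Equivalently, $f$ vanishes on the complementary ``gaps'': on $G_0:=\{1,\dots,r-1\}$, which is immediate from \eqref{eq:def-f:initial}, and on $G_\mu:=\{\mu r+1,\dots,(\mu+1)(r-1)\}$ for $1\le\mu\le r-2$. This follows by induction on $\mu$: for $k\in G_\mu$ one checks from the inequalities defining $G_\mu$ that $k-r$ and $k-r+1$ both lie in $G_{\mu-1}$, so \eqref{eq:def-f:recrel} gives $f(k)=-f(k-r)-f(k-r+1)=0$. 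No arithmetic hypothesis on $r$ is needed here.

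Step (iii) then concludes. Let $k\in\ZZ\setminus n\ZZ$; by step (i) we may assume $1\le k\le n-1$, so $n-k\in\{1,\dots,n-1\}$ and $f(-k)=f(n-k)$. If $f(k)$ and $f(n-k)$ were both nonzero, step (ii) would give $k=\mu(r-1)+j$ and $n-k=\mu'(r-1)+j'$ with $1\le j\le\mu\le r-1$ and $1\le j'\le\mu'\le r-1$; adding, $n=(\mu+\mu')(r-1)+(j+j')$ where $2\le j+j'\le\mu+\mu'\le 2(r-1)$. But $n=r(r-1)+1$: if $\mu+\mu'\ge r$ then $(\mu+\mu')(r-1)\ge n-1$, leaving no room for $j+j'\ge 2$; and if $\mu+\mu'\le r-1$ then $j+j'=(r-\mu-\mu')(r-1)+1\ge r$, contradicting $j+j'\le\mu+\mu'\le r-1$. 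Either way we reach a contradiction, so $f(k)f(-k)=f(k)f(n-k)=0$, which is the claim (and in particular yields \eqref{eq:cross-symmetry}). The delicate point is step (i): the telescoping $x\mapsto x^Q=-(1+x)x^{-1}\mapsto x^{Q^2}=-(1+x)^{-1}$, whose product over the exponents $1,Q,Q^2$ collapses to $1$, is precisely what pins the exponent to $r=p^t+1$; everything else is bookkeeping.
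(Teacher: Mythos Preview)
Your proof is correct and follows the same three-step architecture as the paper: periodicity of $f$ with period $n$ (your step~(i), the paper's Lemma~\ref{lem:periodicity}), vanishing of $f$ on the gaps $G_\mu$ (your step~(ii), which is exactly Lemma~\ref{lem:zero-blocks} in contrapositive form), and a combination argument (your step~(iii), corresponding to the paper's proof of Lemma~\ref{lem:key-lemma}). Steps (ii) and (iii) are essentially the paper's arguments, your (iii) being a symmetric rephrasing in base $r-1$ of the paper's case split in base~$r$.

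The genuine difference is in step~(i). The paper calls the periodicity lemma ``the difficult part'' and defers it to the full version, indicating that the intended argument identifies binomial coefficients among the values of~$f$ and then invokes the periodicity of binomial coefficients modulo~$p$. You instead prove periodicity by showing directly that the characteristic polynomial $g(X)=X^{r}+X+1$ divides $X^{n}-1$ over~$\FF_p$: working in $R=\FF_p[X]/(g)$, you verify $x$ and $x+1$ are units, compute $x^{Q}=-(1+x)x^{-1}$, apply Frobenius to get $x^{Q^{2}}=-(1+x)^{-1}$, and telescope the product $x^{Q^{2}}x^{Q}x=1$. This is slick, entirely self-contained, and makes transparent why the hypothesis $r=p^{t}+1$ is exactly what is needed (it is what makes $y\mapsto y^{Q}$ a ring endomorphism). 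The paper's binomial-coefficient route, by contrast, would yield explicit values of~$f$ along the way, which is extra information but not needed for the present lemma. Your divisibility argument is arguably the cleaner path to the statement as formulated.
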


Combining the above three lemmas, we can complete the proof of Theorem~\ref{thm:main}.

\begin{proof}[Proof of Theorem~\ref{thm:main}.]
  Let~$p$ be a prime number.
  For every integer $t\ge 1$, let $r := p^t+1$ and $\nt := r(r-1) +1$, and define the matrix $\Mt := M$
  over $\FF_p$ as in~\eqref{eq:def-M}.  
  By Lemma~\ref{lem:rk-M}, the rank of $\Mt$ is at most~$r$, and from Lemmas \ref{lem:fool-eq}
  and~\ref{lem:key-lemma} we conclude that $\Mt$ is a fooling-set matrix.  Hence, we have
  \begin{equation*}
    1 \ge \frac{ \nt }{ \rk_{\FF_p} (\Mt)^2 } \ge \frac{ r^2-r+1 }{r^2} \ge 1 - p^{-t}/4 \xrightarrow{t\to\infty} 1,
  \end{equation*}
  where the left-most inequality is from~\eqref{eq:rk-of-fool}.
\end{proof}

To prove Lemma~\ref{lem:key-lemma}, we need two more lemmas.  The first one states that in every section
$\{jr,\dots,(j+1)r-1\}$, $j=0,1,\dots$, there is a block of zeros whose length decreases with~$j$.

\begin{lemma}\label{lem:zero-blocks}
  For $j=0,\dots,r-2$, we have
  \begin{equation}\label{eq:zero-block}
    f(jr + i ) = 0 \quad\text{for $i = 1,\dots, r-1-j$.}
  \end{equation}
\end{lemma}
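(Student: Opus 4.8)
The plan is to prove~\eqref{eq:zero-block} by induction on~$j$, using nothing beyond the initial conditions~\eqref{eq:def-f:initial} for the base case and a single application of the recurrence~\eqref{eq:def-f:recrel} for the inductive step. The mechanism is transparent: a zero block of length $\ell=r-1-j$ occupies $\ell$ consecutive indices inside the section $\{jr,\dots,(j+1)r-1\}$, and at each of the $\ell-1$ indices~$k$ spanned by a \emph{consecutive pair} from that block both $f(k)$ and $f(k+1)$ vanish; by~\eqref{eq:def-f:recrel}, $f(k+r)$ then vanishes as well. Shifting by~$r$ lands these $\ell-1$ forced zeros inside the next section, producing the next zero block, necessarily one shorter since a block of length $\ell$ contains only $\ell-1$ consecutive pairs — exactly the decrement from $r-1-j$ to $r-1-(j+1)$ asserted by the lemma.

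First I would dispose of the base case $j=0$: the claim $f(i)=0$ for $i=1,\dots,r-1$ is precisely~\eqref{eq:def-f:initial}. For the inductive step, I take $0\le j\le r-3$ and assume $f(jr+i)=0$ for $i=1,\dots,r-1-j$. Fixing $i\in\{1,\dots,r-2-j\}$ and applying~\eqref{eq:def-f:recrel} with $k=jr+i$ gives
\begin{equation*}
  f\bigl((j+1)r+i\bigr) \;=\; f\bigl((jr+i)+r\bigr) \;=\; -\,f(jr+i)\,-\,f\bigl(jr+(i+1)\bigr).
\end{equation*}
Both $i$ and $i+1$ lie in $\{1,\dots,r-1-j\}$, so the induction hypothesis makes the right-hand side zero; since $i$ was an arbitrary element of $\{1,\dots,r-2-j\}$ and $r-2-j=r-1-(j+1)$, this is~\eqref{eq:zero-block} for $j+1$, closing the induction.

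I do not anticipate a real obstacle here — it is a short bookkeeping induction. The one point that rewards a moment's care is checking that the shifted index $i+1$ still lands \emph{inside the previous} zero block; this is what forces (and is forced by) the block length $r-1-j$ decreasing by exactly one at each step, and it uses~\eqref{eq:def-f:recrel} only for nonnegative arguments. As an alternative route one could read the statement off the generating function $F(x)=\sum_{k\ge 0}f(k)x^k=(1+x^{r-1})/(1+x^{r-1}+x^r)$, which follows directly from~\eqref{eq:def-f}, by expanding $1/\bigl(1+x^{r-1}(1+x)\bigr)$ as a geometric series and tracking the supports of the resulting terms $(-1)^m x^{m(r-1)+1}(1+x)^{m-1}$; but the induction above is cleaner and entirely self-contained.
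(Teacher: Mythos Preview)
Your proof is correct and is essentially the same as the paper's: induction on~$j$, with the base case supplied by~\eqref{eq:def-f:initial} and the inductive step obtained by a single application of~\eqref{eq:def-f:recrel} at $k=jr+i$ for $i=1,\dots,r-2-j$, noting that both $i$ and $i{+}1$ fall in the previous zero block. The paper's write-up is terser but the argument is identical; your generating-function aside is not needed (and its displayed term has a small index slip), but since you do not rely on it this does not affect the proof.
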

\begin{proof}
  Equation~\eqref{eq:zero-block} is true for $j=0$ by~\eqref{eq:def-f:initial}.
  Suppose~\eqref{eq:zero-block} holds for some $j<r-2$.  Then $f((j+1)r + i ) = 0$ for $i = 1,\dots, r-1-(j+1)$, because, by~\eqref{eq:def-f:recrel},
  \begin{equation*}
    f((j+1)r + i)
    =
    f(jr + i + r)
    = 
    -f(jr + i)  -  f(jr + (i+1))
    = -0 - 0
  \end{equation*}
  holds.
\end{proof}

Every function on~$\ZZ$ with values in a finite field which is defined by a (reversible) linear
recurrence relation is periodic (cf.\ e.g.~\cite{LidlNiederreiter94}).  The second lemma establishes that
a specific number~$n$ is a period of~$f$ as defined in~\eqref{eq:def-f}.

\begin{lemma}\label{lem:periodicity}
  If $r = p^t+1$ for some integer $t\ge1$, then $n := r(r-1) +1$ is a period of the function~$f$.
\end{lemma}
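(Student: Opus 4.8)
The plan is to work with the companion matrix of the recurrence and show that its $n$-th power is the identity, which is exactly the statement that $n$ is a period. Concretely, let $A \in \FF_p^{r\times r}$ be the companion matrix of~\eqref{eq:def-f:recrel}, so that the state vector $v_k := (f(k), f(k+1), \dots, f(k+r-1))^{\Tp}$ satisfies $v_{k+1} = A v_k$, hence $v_{k+n} = A^n v_k$. Since $v_0 = e_1$ (the first standard basis vector) by~\eqref{eq:def-f:initial}, and since the vectors $v_0, v_1, \dots, v_{r-1}$ span $\FF_p^r$ (they form the regular top-left block discussed after Lemma~\ref{lem:rk-M}), it suffices to prove $A^n = \Id$. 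The characteristic polynomial of~$A$ is $g(X) = X^r + X + 1 \in \FF_p[X]$ (up to sign), so the whole problem reduces to showing $g(X) \mid X^n - 1$ in $\FF_p[X]$, i.e.\ that every root of $g$ in $\overline{\FF_p}$ is an $n$-th root of unity, where $n = r(r-1)+1 = p^{2t} + p^t + 1$ and $r = p^t + 1$.

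The key arithmetic identity to exploit is that $n = p^{2t} + p^t + 1 = \dfrac{p^{3t} - 1}{p^t - 1}$, so $X^n - 1$ divides $X^{p^{3t}-1} - 1$, whose roots are exactly the nonzero elements of $\FF_{p^{3t}}$. So I would first show that $g(X) = X^r + X + 1$ splits over $\FF_{p^{3t}}$, i.e.\ that every root $\alpha$ of $g$ lies in $\FF_{p^{3t}}^{\times}$; this already gives $\alpha^{p^{3t}-1} = 1$. Then I would upgrade this to $\alpha^n = 1$ by a Frobenius argument: writing $q := p^t$, the relation $\alpha^r = -\alpha - 1$ becomes $\alpha^{q+1} = -(\alpha+1)$, i.e.\ $\alpha \cdot \alpha^q = -(\alpha + 1)$. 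Applying Frobenius $\phi: x \mapsto x^q$ repeatedly and using additivity of $\phi$ over $\FF_p$ (here is where $\mathrm{char} = p$ and $r = p^t+1$ enter crucially, since $\phi(\alpha+1) = \alpha^q + 1$), one gets a short system relating $\alpha$, $\alpha^q$, $\alpha^{q^2}$; multiplying these together should collapse to $\alpha^{1 + q + q^2} = \alpha^n = \pm 1$, with the sign worked out to be $+1$ (using that $\alpha \ne 0$ and, if needed, a parity check on $p$). I would also need to confirm $\alpha \ne 0$, which is immediate since $g(0) = 1$, and handle the prime $p = 2$ (where signs are trivial) and odd $p$ uniformly.

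The main obstacle I anticipate is the sign bookkeeping and, more substantially, verifying that $g(X) = X^{p^t+1} + X + 1$ actually splits over $\FF_{p^{3t}}$ — equivalently, that every irreducible factor of $g$ over $\FF_p$ has degree dividing $3t$. The Frobenius computation above, if carried out carefully, should in fact prove this directly: from $\alpha^{q+1} = -(\alpha+1)$ one derives $\alpha^{q^3} = \alpha$ (that is the content of the collapse $\alpha^{q^3 - 1} = \alpha^{(q-1)n} = 1$ once we know $\alpha^n = \pm 1$ and chase a small amount of exponent arithmetic, noting $q^3 - 1 = (q-1)n$), and $\alpha^{q^3} = \alpha$ is precisely the statement $\alpha \in \FF_{q^3} = \FF_{p^{3t}}$. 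So the logical order I would actually follow is: (i) let $\alpha$ be any root of $g$ in $\overline{\FF_p}$, note $\alpha \neq 0$; (ii) from $\alpha^{q} = -(\alpha+1)/\alpha$ and Frobenius-additivity, compute $\alpha^{q^2}$ and then $\alpha^{q^3}$ explicitly in terms of $\alpha$, showing $\alpha^{q^3} = \alpha$; (iii) deduce $\alpha \in \FF_{p^{3t}}^\times$, hence $\alpha^{p^{3t}-1}=1$; (iv) factor $p^{3t}-1 = (p^t-1)\,n$ and re-run the same Frobenius identities to pin down $\alpha^n$ exactly, getting $\alpha^n = 1$; (v) conclude $g \mid X^n - 1$, hence $A^n = \Id$, hence $v_{k+n} = v_k$ for all $k$, i.e.\ $f(k+n) = f(k)$. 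Step (ii)–(iv) is where all the real work sits; everything else is formal.
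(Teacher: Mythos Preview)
Your approach is correct and genuinely different from the one sketched in the paper. Setting $q:=p^t$, from $\alpha^{q+1}=-(\alpha+1)$ one gets $\alpha^q=-1-\alpha^{-1}$ (note $g(0)=1$ and $g(-1)=(-1)^{q+1}\ne 0$, so $\alpha\ne 0,-1$); applying the $q$-power Frobenius twice yields $\alpha^{q^2}=-1/(\alpha+1)$ and $\alpha^{q^3}=\alpha$, and multiplying the three expressions gives $\alpha^{1+q+q^2}=\alpha^{n}=1$ on the nose, no sign ambiguity. Hence $g(X)\mid X^n-1$ in $\FF_p[X]$, and since the companion matrix has minimal polynomial~$g$, this already gives $A^n=\Id$ without needing the spanning argument for $v_0,\dots,v_{r-1}$ (your reference to the ``top-left block'' after Lemma~\ref{lem:rk-M} is not quite the same object as the matrix of state vectors, but the point is moot). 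The paper instead proceeds combinatorially: it identifies the values $f(jr+i)$ explicitly with binomial coefficients and then invokes the periodicity of binomial coefficients modulo~$p$ (essentially Lucas' theorem). Your route is shorter and more structural, explaining \emph{why} $n=(q^3-1)/(q-1)$ is the right number via the Galois action; the paper's route is more hands-on and yields the actual values of~$f$ along the way, which dovetails with the explicit zero-block computation in Lemma~\ref{lem:zero-blocks}.
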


This lemma is the difficult part of the proof of Theorem~\ref{thm:main}.  Due to the space limitations,
for its proof, we have to refer to the full paper.  At this point, suffice it to say that the argument
proceeds by identifying binomial coefficients among the values of~$f$, and then uses the known
periodicity of the binomial coefficients modulo~$p$.

Lemmas \ref{lem:zero-blocks} and~\ref{lem:periodicity} allow us to prove Lemma~\ref{lem:key-lemma}.

\begin{proof}[Proof of Lemma~\ref{lem:key-lemma}.]
  We need to show $f(k) f(-k) = 0$ whenever $n \nmid k$.  By Lemma~\ref{lem:periodicity}, this is
  equivalent to showing $f(k) f(n-k) = 0$ for $k=1,\dots,n-1$.  Given such a~$k$, let $j,i$ be such that
  $k = jr +i$ and $0\le i \le r-1$.
  
  If $i \le r-1-j$, then $f(k)=0$ by Lemma~\ref{lem:zero-blocks}, and we are done.  If, on the other
  hand, $i > r-1-j$, then
  \begin{equation*}
    n-k
    =
    r^2 - r +1 - jr - i
    =
    (r-1-(j+1))r  + (r-i+1),
  \end{equation*}
  and $r-i+1 \le j+1$, so, by Lemma~\ref{lem:zero-blocks}, we have $f(n-k) = 0$.
\end{proof}

\section{Conclusion}\label{sec:conclusio}

Dietzfelbinger et al.'s original question regarding the tightness of inequality~\eqref{eq:rk-of-fool} for
0/1-matrices remains open in characteristic $p > 2$.  For these matrices, it may still be possible that
the exponent on the rank in the inequality~\eqref{eq:rk-of-fool} can be improved.

For characteristic zero, Klauck and de Wolf~\cite{KlauckDewolf13} have given an example of a fooling-set
matrix of size~$6$ with entries in $\{0,\pm1\}$ which as rank~$3$ .  Thus, using the method sketched
above (following Theorem~\ref{thm:main}), the exponent on the rank in inequality~\eqref{eq:rk-of-fool}
with $\kk:=\QQ$ for general (i.e., not 0/1) matrices is at least $\log_3 6 = 1.63\dots$, while the best
known bound for 0/1-matrices is $\log_4 6 = 1.292\dots$.

We would like to point out the possibility that, in characteristic zero, the minimum achievable rank on
the right hand side of inequality~\eqref{eq:rk-of-fool} may depend not only on the characteristic, but on
the field~$\kk$ itself.  Indeed, there are examples of zero-nonzero patterns for which the minimum rank
of a matrix with that zero-nonzero pattern differs between $\kk = \QQ$ and $\kk = \RR$, see
e.g.~\cite{KoppartyBhaskararao08}.
Hence, for characteristic zero, we ask the following weaker version of Dietzfelbinger et al.'s question.

\begin{question}
  Is there a field~$\kk$ (of characteristic zero) over which the fooling-set matrix size vs.\ rank
  inequality in~\eqref{eq:rk-of-fool} can be improved?
\end{question}

As mentioned in Section~\ref{sec:connect}, another question in characteristic zero comes from polytope
theory.  Let~$P$ be a polytope.  Let $A$ be a matrix whose rows are indexed by the facets of~$P$ and
whose columns are indexed by the vertices of~$P$, and which satisfies
$A_{F,v} = 0$, if $v \in F$, and $A_{F,v} \ne 0$, if $v \notin F$.
For any fooling-set submatrix of size~$n$ of~$A$, the following inequality follows
from~\eqref{eq:rk-of-fool} (cf.~\cite{FioriniKaibelPashkovichTheis13}):
\begin{equation}\label{eq:dietz:ptp}
  n \le (\dim P+1)^2.
\end{equation}
The following variant of Dietzfelbinger et al.'s question is of pertinence in Polytope Theory and
Combinatorial Optimization (see Section~\ref{sec:connect}).

\begin{question}
  Can the fooling-set size vs.\ dimension inequality~\eqref{eq:dietz:ptp} be improved (for polytopes)?
\end{question}

To our knowledge, the best known lower bound for the best possible exponent on the dimension in
inequality~\eqref{eq:dietz:ptp} is~1.

Finally, the complexity of the Fooling-Set-Submatrix problem is still open:

\begin{conjecture}
  The \textit{Fooling-Set-Submatrix problem}\\
  \begin{tabular}{ll}
    \bf Input:&  Integers~$n,m$ and $m\times m$ 0/1-matrix~$A$ \\
    \bf Output:& ``Yes'', if a fooling-set submatrix of size~$n$ of~$A$ exists,\\&``No'' otherwise.
  \end{tabular}\\
  is NP-hard.
\end{conjecture}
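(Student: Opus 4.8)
Membership in NP is immediate: a list of the $n$ row indices and $n$ column indices spanning a fooling-set submatrix can be checked against~\eqref{eq:def-fool} in polynomial time, so the intended final outcome is in fact NP-completeness, and only NP-hardness needs work. To get it, the plan is to exhibit a polynomial-time reduction from a suitable NP-hard combinatorial problem.

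The first step is a reformulation, along the lines already indicated in Section~\ref{sec:connect}. Attach to the input matrix~$A$ its bipartite graph~$H_A$, with one vertex per row, one per column, and an edge $\{i,j\}$ whenever $A_{i,j}\ne 0$. Unwinding the definitions exactly as in the discussion around Lemma~\ref{lem:fool-eq}, a fooling-set submatrix of size~$n$ of~$A$ is literally the same object as a cross-free matching of size~$n$ in~$H_A$ (equivalently, an $n$-clique in the ``graph of a certain type'' whose vertices are the nonzero entries $(i,j)$ of~$A$ and in which $(i,j)$ and $(i',j')$ are adjacent iff $i\ne i'$, $j\ne j'$, and $A_{i,j'}A_{i',j}=0$). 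Hence it suffices to prove that, given a bipartite graph and an integer~$n$, deciding whether a cross-free matching of size~$n$ exists is NP-hard.

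For this I would reduce from \textsc{Maximum Induced Matching} on bipartite graphs, which is NP-hard: an induced matching is a matching no two of whose edges are joined by \emph{any} edge of the host graph, so it is exactly the ``cross-free'' condition made one notch stronger, and the gap between the two notions should be bridgeable by a gadget. Given a bipartite graph~$H_0$ with edge set~$E_0$, build~$H$ as follows. For each $e\in E_0$ install a private block --- crudely a single fresh matched pair $\{x_e,y_e\}$, more safely a complete bipartite $K_{N,N}$ on fresh vertices $x_e^1,\dots,x_e^N$ and $y_e^1,\dots,y_e^N$ --- and for every pair $e,f\in E_0$ that is \emph{incompatible} for induced matchings (i.e.\ $e$ and $f$ share an endpoint, or an edge of~$H_0$ joins an endpoint of~$e$ to one of~$f$) add \emph{all} cross-edges $x_e^a y_f^b$ and $x_f^a y_e^b$ between the two blocks; between blocks of compatible edges put nothing. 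Two ``canonical'' matched edges, one inside block~$e$ and one inside block~$f$ with $e\ne f$, then span an induced $C_4$ of~$H$ exactly when $e,f$ are incompatible; consequently the cross-free matchings of~$H$ that take one edge inside each block they use correspond to the induced matchings of~$H_0$, and in particular $H$ has a cross-free matching of size~$k$ whenever $H_0$ does.

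The step I expect to be the main obstacle is the converse inequality --- that \emph{every} cross-free matching of~$H$ has size at most the induced matching number of~$H_0$ --- because this is exactly where the obvious shortcuts break down. The enemy is \emph{spurious} fooling-set submatrices that route through cross-edges rather than canonical edges. That this enemy is real shows already in the most naive attempt: encoding a graph~$G$ by $A:=I+B$, with $B$ the adjacency matrix of~$G$, in the hope that fooling-set submatrices of~$A$ track independent sets of~$G$, fails for $G=C_5$, where~$A$ still admits a fooling-set submatrix of size~$3$ although $\alpha(C_5)=2$. What must therefore be proved is a rigidity lemma for the construction above: the $K_{N,N}$ blocks force any cross-free matching of~$H$ to touch each block at most once on each side, and any matching edge running between two blocks can be ``straightened'' to a canonical edge inside a single block without creating a $C_4$ and without shrinking the matching, so that the maximum is always attained by a canonical --- hence induced-matching --- solution. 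Pushing this accounting through cleanly is the crux; should the block gadget prove unwieldy, a fallback is a direct reduction from \textsc{3-Sat}, assembling~$A$ from variable- and clause-submatrices whose individual contributions to the fooling-set submatrix size are tallied explicitly, but that route too rests on an analogous ``no cheating'' lemma. Together with membership in NP, a correct reduction gives NP-completeness of the Fooling-Set-Submatrix problem.
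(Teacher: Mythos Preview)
This statement is listed in the paper as an open \emph{Conjecture}; the paper offers no proof, so there is nothing to compare your proposal against on that axis. What you have submitted is therefore a proposed attack on an open problem, and it has to be judged as such.

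As a proof, it is incomplete, and you say so yourself: the entire weight rests on the ``rigidity lemma'' that every cross-free matching of your gadget graph~$H$ can be straightened into a canonical one without loss, and you do not prove it. Until that lemma is established, nothing has been shown.

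There is also concrete reason to doubt the construction as written. Take $N=1$ and let $G$ be the compatibility graph on $E_0$ (edges of $H_0$ adjacent in~$G$ iff they form an induced matching of size~$2$). Your bipartite graph~$H$ then has adjacency $x_e$--$y_f$ iff $e=f$ or $e\not\sim_G f$, and a cross-free matching of size~$k$ in~$H$ is a system of pairs $(e_i,f_i)$, $i=1,\dots,k$, with the $e_i$ distinct, the $f_i$ distinct, each pair satisfying $e_i=f_i$ or $e_i\not\sim_G f_i$, and for $i\ne j$ at least one of $e_i\sim_G f_j$, $e_j\sim_G f_i$. When $G=C_5$ on $\{1,\dots,5\}$ one finds the triple $(1,3),(4,2),(5,5)$ satisfies all these conditions, so $H$ would carry a cross-free matching of size~$3$ while the clique number of~$G$ (i.e.\ the induced-matching number of~$H_0$) is~$2$. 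So either you must argue that no bipartite $H_0$ has a compatibility graph containing an induced $C_5$ of this sort (not obvious), or the ``straightening'' step fails already for $N=1$; and taking $N>1$ only enlarges the search space for spurious matchings, it does not shrink it. Your own cautionary example with $A=I+B$ and $G=C_5$ is essentially the same phenomenon re-emerging inside the gadget.

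In short: the reduction target (bipartite induced matching) is reasonable, but the gadget as described does not yet come with a correctness proof, and the natural attempt to supply one runs into the very obstruction you flagged. At present this is a plan with a clearly identified, and genuinely open, gap.
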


\tiny%

\providecommand{\bysame}{\leavevmode\hbox to3em{\hrulefill}\thinspace}
\providecommand{\MR}{\relax\ifhmode\unskip\space\fi MR }
\providecommand{\MRhref}[2]{%
  \href{http://www.ams.org/mathscinet-getitem?mr=#1}{#2}
}
\providecommand{\href}[2]{#2}


\begin{thebibliography}{10}

\bibitem{AroraBarak09}
Sanjeev Arora and Boaz Barak, \emph{Computational complexity}, Cambridge
  University Press, Cambridge, 2009, A modern approach. \MR{2500087
  (2010i:68001)}

\bibitem{CohenRothblum93}
Joel~E. Cohen and Uriel~G. Rothblum, \emph{Nonnegative ranks, decompositions,
  and factorizations of nonnegative matrices}, Linear Algebra Appl.
  \textbf{190} (1993), 149--168. \MR{1230356 (94i:15015)}

\bibitem{Dawande03}
Milind Dawande, \emph{A notion of cross-perfect bipartite graphs}, Inform.
  Process. Lett. \textbf{88} (2003), no.~4, 143--147. \MR{2009283
  (2004g:05118)}

\bibitem{DietzfelbingerHromkovicSchnitger96}
Martin Dietzfelbinger, Juraj Hromkovi{\v{c}}, and Georg Schnitger, \emph{A
  comparison of two lower-bound methods for communication complexity}, Theoret.
  Comput. Sci. \textbf{168} (1996), no.~1, 39--51, 19th International Symposium
  on Mathematical Foundations of Computer Science (Ko{\v{s}}ice, 1994).
  \MR{1424992 (98a:68068)}

\bibitem{FioriniKaibelPashkovichTheis13}
Samuel Fiorini, Volker Kaibel, Kanstantin Pashkovich, and Dirk~Oliver Theis,
  \emph{Combinatorial bounds on nonnegative rank and extended formulations},
  \href{http://arxiv.org/abs/1111.0444}{arXiv:1111.0444}) (to appear in
  \textit{Discrete Math.}), 2013+.

\bibitem{GruberHolzer06}
Hermann Gruber and Markus Holzer, \emph{Finding lower bounds for
  nondeterministic state complexity is hard (extended abstract)}, Developments
  in language theory, Lecture Notes in Comput. Sci., vol. 4036, Springer,
  Berlin, 2006, pp.~363--374. \MR{2334484}

\bibitem{JuknaKulikov09}
S.~Jukna and A.~S. Kulikov, \emph{On covering graphs by complete bipartite
  subgraphs}, Discrete Math. \textbf{309} (2009), no.~10, 3399--3403.
  \MR{2526759 (2010h:05231)}

\bibitem{KlauckDewolf13}
Hartmut Klauck and Ronald de~Wolf, \emph{Fooling one-sided quantum protocols},
  \href{http://arxiv.org/abs/1204.4619}{arXiv:1204.4619}, 2012.

\bibitem{KoppartyBhaskararao08}
Swastik Kopparty and K.~P.~S. Bhaskara~Rao, \emph{The minimum rank problem: a
  counterexample}, Linear Algebra Appl. \textbf{428} (2008), no.~7, 1761--1765.
  \MR{2388655 (2009a:15002)}

\bibitem{KushilevitzNisan97}
Eyal Kushilevitz and Noam Nisan, \emph{Communication complexity}, Cambridge
  University Press, Cambridge, 1997. \MR{1426129 (98c:68074)}

\bibitem{LidlNiederreiter94}
Rudolf Lidl and Harald Niederreiter, \emph{Introduction to finite fields and
  their applications}, first ed., Cambridge University Press, Cambridge, 1994.
  \MR{1294139 (95f:11098)}

\bibitem{LovaszSaks88Moeb}
L.~Lov{\'a}sz and M.~Saks, \emph{M{\"o}bius functions and communication
  complexity}, Proc.\ 29th IEEE FOCS, IEEE, 1988, pp.~81--90.

\bibitem{SotoTelha11}
Jos{\'e}~A. Soto and Claudio Telha, \emph{Jump number of two-directional
  orthogonal ray graphs}, Integer programming and combinatorial optimization,
  Lecture Notes in Comput. Sci., vol. 6655, Springer, Heidelberg, 2011,
  pp.~389--403. \MR{2820923 (2012j:05305)}

\bibitem{Yannakakis91}
Mihalis Yannakakis, \emph{Expressing combinatorial optimization problems by
  linear programs}, J. Comput. System Sci. \textbf{43} (1991), no.~3, 441--466.
  \MR{1135472 (93a:90054)}

\end{thebibliography}
\end{document}